\let\oldleq\leq
\let\leq\oldleq
\newtheorem{Th}{Theorem}
\begin{document}
\mainmatter              
\title{Non-monotone Behavior of the Heavy Ball Method}
\titlerunning{Non-monotone Behavior of the Heavy Ball Method}  
%
\author{Marina Danilova\inst{1}, Anastasiya Kulakova\inst{2} and Boris Polyak\inst{3}}
\authorrunning{M. Danilova, A. Kulakova, B. Polyak} 
%
%

\institute{
Institute of Control Sciences RAS,
Laboratory of Adaptive and Robust Systems,
Profsoyuznaya, 65,
Moscow 117342,
Russia \\
\email{Danilovamarina15@gmail.com}
\and
Moscow Institute of Physics and Technology, 
Department of Control and Applied Mathematics,
Institutskiy per., 9,  
Dolgoprudniy 141700,
Russia \\
\email{anastasiya.kulakova@phystech.edu}
\and
Institute of Control Sciences RAS,
Laboratory of Adaptive and Robust Systems,
Profsoyuznaya, 65,
Moscow 117342,
Russia \\
\email{boris@ipu.ru}}

\maketitle              

\begin{abstract}
We focus on the solutions of second-order stable linear difference equations and demonstrate that their behavior can be non-monotone and exhibit peak effects depending on initial conditions. The results are applied to the analysis of the accelerated unconstrained optimization method --- the Heavy Ball method. We explain non-standard behavior of the method discovered in practical applications. In addition, such non-monotonicity complicates the correct choice of the parameters in optimization methods. We propose to overcome this difficulty by introducing new Lyapunov function which should decrease monotonically. By use of this function convergence of the method is established under less restrictive assumptions (for instance, with the lack of convexity). We also suggest some restart techniques to speed up the method's convergence.
\keywords{difference equations, optimization methods, non-monotone behavior, the Heavy Ball method, Lyapunov function, global convergence}
\end{abstract}
\section{Introduction}
It is well known that $n$-th order scalar linear difference equations
\[
x_k + a_1x_{k-1} + \dots + a_n x_{k-n} = 0,\ k = n,\ n+1,\ \dots;\ a_i\in \mathbb{R}
\]
with initial conditions
$$x^{(0)} = (x_0, \dots, x_{n-1})\in \mathbb{R}^n$$
are stable, i.e. $\lim\limits_{k\rightarrow\infty} x_k=0$,  if and only if the moduli of the roots $\lambda_i$ 
of the characteristic polynomial
\begin{equation*}\label{poly}
p(\lambda) = \lambda^n + a_1 \lambda^{n-1} + \dots + a_{n-1} \lambda + a_n
\end{equation*}
are less than 1 \cite{El}. 

However, the convergence to zero can be non-monotone. This effect can  be described by the following quantity:
\begin{equation*}\label{eta}
\eta(x^{(0)}) = \max_{k=n, n+1, \dots}|x_k|,
\end{equation*}
which will be referred to as peak of the solution (provided that $\eta(x^{(0)}) > 1$) for a given root location $\lambda$ and initial condition $x^{(0)}$. Without loss of generality we assume that $\|x^{(0)}\|_{\infty}\leq 1$. The estimates of $\eta(x^{(0)})$ are demonstrated in the recent paper \cite{shch2018}.

The main objective of the present paper is to link the peak effects in linear difference equations  with the non-monotone behavior of such unconstrained optimization methods as the Heavy Ball method~\cite{Polyak_1,polyak1987}, Nesterov's accelerated gradient method~\cite{nest2004} and, for example, the recently proposed triple momentum method~\cite{fast2018}.
When applied to a quadratic function, momentum methods are described by second-order linear matrix difference equations. In numerous simulations it was found out that the methods demonstrate a non-monotone convergence to a minimum~\cite{boyd2014}.

Below we restrict ourselves with the analysis of  the Heavy Ball method, but similar techniques can be extended to other accelerated optimization methods. It is worth mentioning that such methods are currently widespread and implemented to minimization problems occurring in neural networks. That is why a detailed study of these methods is of a great importance.
In addition, the numerous restart techniques~\cite{Restart_2} gain their popularity. To design the restart technique it is important to know how large the deviations from the solution should be to make an assumption about incorrect parameters' choice and start the method from the beginning at the current point.

The paper
is organized as follows. In Section \ref{lde_sec} we provide the results on peak effects in second-order linear difference equations. Next Section 3 contains applications of these results to the Heavy Ball method and demonstrates the dependence of peak value on initial conditions. In Section 4 a new Lyapunov function 
is constructed; it decreases monotonically in contrast with the objective function or distance to the solution. By use of this function we are also able to prove global convergence for non-convex functions (under less restrictive Polyak-$\L$ojasiewicz condition). Conclusions and future directions for research are summarized in Section 5. 

\section{Peak effect for second-order difference equations}\label{lde_sec}

We consider the  case of second-order difference equation ($n=2$), which can be written down in the form
\begin{equation}\label{lde_2}
x_k = a_1x_{k-1} + a_2 x_{k-2},\ k = 2,\ 3,\ \dots;\ a_1,\ a_2 \in \mathbb{R} 
\end{equation}
with initial conditions 
$$x^{(0)} = (x_0, x_1)\in \mathbb{R}^2$$
and the characteristic polynomial
\begin{equation}\label{poly_2}
p(\lambda) = \lambda^2 - a_1 \lambda - a_2. 
\end{equation}

For the second-order difference equation  the following stability domain (Fig.~\ref{domain}) in the space $(a_1, a_2)$ is well known \cite{El}.

\begin{figure}[h!]
\center{\includegraphics[width=0.8\linewidth]{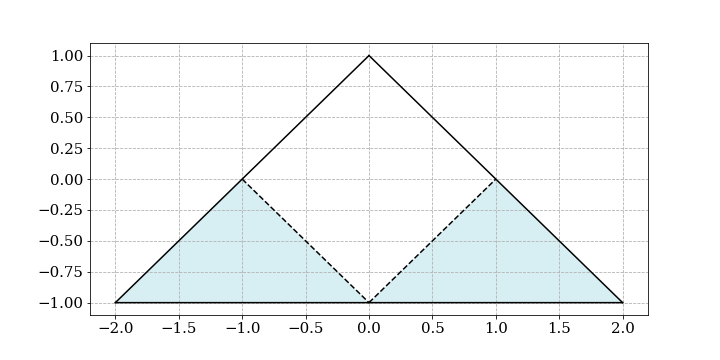}}
\caption{The stability domain of the second-order difference equation, the shaded area corresponds to peak effect.}
\label{domain}
\end{figure}

Our aim is to describe possible non-monotone behavior of stable solutions and, in particular, to measure $\eta(x^{(0)})$.

Two cases can be considered: the first corresponding to equal roots of the characteristic polynomial and the second with different roots. 

In the case of equal roots, i.e. $\lambda^2 - a_1 \lambda - a_2 = (\lambda - \rho)^2 = \lambda^2 - 2\lambda\rho + \rho^2,\ 0<\rho<1$, and (\ref{lde_2}) reads:
\begin{equation*}\label{rho_root}
x_k = 2\rho x_{k-1} - \rho^2x_{k-2}.
\end{equation*}

The following expression for solution can be easily derived:
\begin{equation}\label{xk_rho}
x_k = x_1k\rho^{k-1} - x_0(k-1)\rho^k.
\end{equation}

In this case the non-monotone behavior can be observed (Fig.~\ref{double}).

\begin{figure}[h!]
\center{\includegraphics[width=0.8\linewidth]{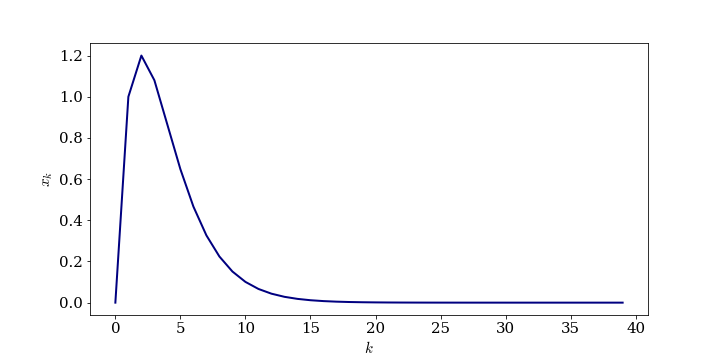}}
\caption{The iterative process with $\rho = 0.6$ and $x^{(0)} = (0, 1)$.}
\label{double}
\end{figure}

We conclude that for all $k\geq 2$ 
\begin{equation}\label{peak_2}
\max\limits_{\parallel x^{(0)}\parallel_{\infty}\leq1}x_k = k\rho^{k-1} + (k-1)\rho^k.
\end{equation}

This maximum is achieved for $x^{(0)} = (-1, 1)$. 

Now let's derive $k_{max} = \mathrm{argmax}\left(k\rho^{k-1} + (k-1)\rho^k \right) $ and $\eta(x^{(0)})$. 
By differentiation and setting the derivative value equal to zero we obtain an expression for $k_{max}$:
\begin{equation}\label{k_max}
k_{max} = \left\lceil \frac{\rho \ln \rho - \rho - 1}{\ln \rho (1 + \rho)} \right\rceil.
\end{equation}

The value of the peak can be obtained by a substitution (\ref{k_max}) in the formula~(\ref{peak_2}). 

For $\rho \to 1$ we get
\begin{equation*}
\eta(x^{(0)})\approx \frac{2}{e(1-\rho)}.
\end{equation*}

Thus large deviations can arise for some initial conditions. However, some pairs $(x_0, x_1)$ do not imply peak effects. For instance, for $x_0=1, x_1=1$ we obtain that $|x_k|< 1, \ k=2, \dots $.

Considering the case of different real roots $\lambda_1, \lambda_2$ of the characteristic polynomial~(\ref{poly_2}), we notice the following:

$\bullet$ If $\lambda_2 > \lambda_1 > \rho, \ 0<\rho<1$,  than there exist such initial conditions $x^{(0)}$, that the trajectory $x_k$ will behave non-monotonically and its peak will be located higher than in the case of equal roots (Fig.~\ref{more}).
\begin{figure}[h]
\center{\includegraphics[width=0.8\linewidth]{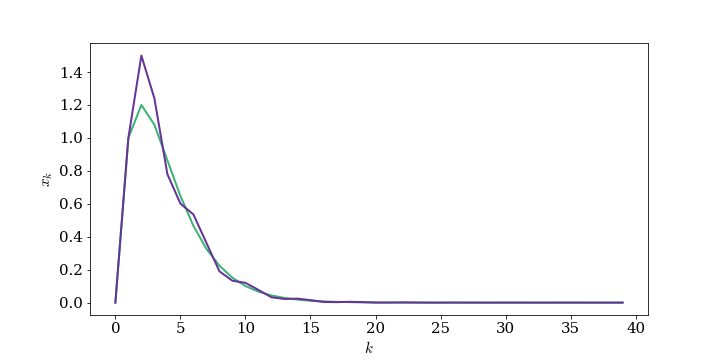}}
\caption{The trajectories of the iterative processes with initial conditions $x^{(0)} = (0, 1)$ and $\rho = 0.6$ (green) and $\lambda_1 = 0.7,\ \lambda_2 = 0.8$ (purple).}
\label{more}
\end{figure}

$\bullet$ If $\lambda_1 < \lambda_2 < \rho, \ 0<\rho<1$, the peak of $x_k$ (if any) will be located lower than in the case of equal roots $\rho$ for all initial conditions (Fig.~\ref{less}).
\begin{figure}[h]
\center{\includegraphics[width=0.8\linewidth]{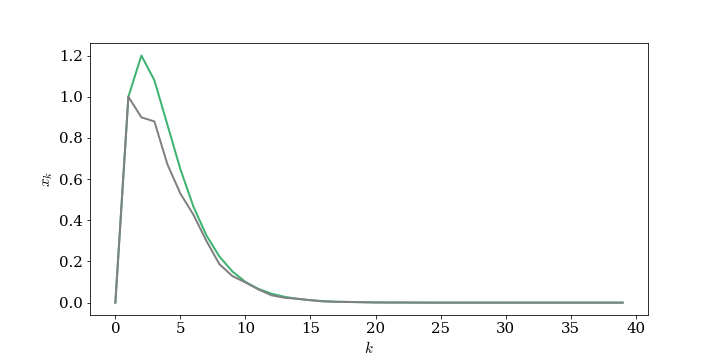}}
\caption{The trajectories of the iterative processes with initial conditions $x^{(0)} = (0, 1)$ and $\rho = 0.6$ (green) and $\lambda_1 = 0.4,\ \lambda_2 = 0.5$ (gray).}
\label{less}
\end{figure}

The proof of both statements is given in~\cite{shch2018}.

\section{Analysis of the Heavy Ball method }
\subsection{The Heavy Ball method}
We consider the simplest unconstrained optimization problem
\begin{equation}\label{opt_pr}
\min\limits_{x\in \mathbb{R}^n} f(x),
\end{equation}
where $f(x): \mathbb{R}^n \to \mathbb{R}$ is a smooth objective function, $x^*$~--- the minimum point. We restrict our analysis with the case of
 quadratic objective function:
\begin{equation*}\label{opt_qu}
\min\frac{1}{2}(Ax,x)-(b,x),\ x,\ b \in \mathbb{R}^n 
\end{equation*}
with $A\in \mathbb{R}^{n \times n}$ being positive-definite matrix $A\succ 0$, $\ \nabla f(x) = Ax - b$, $\ x^* = A^{-1}b$, $L$ and $\mu>0$~--- the maximum and minimum eigenvalues of $A$ respectively. It means that we focus on strongly convex, smooth case of the objective function.

Without  loss of generality we can assume that after substitution $\hat{x} = x - x^*$ objective function $f \ $ has the
following form:
\begin{equation*}\label{opt_qu1}
f(x) = \frac{1}{2}(A\hat{x},\hat{x})   
\end{equation*}
So, the optimal point is $x^* = 0$, $f^* = 0$.

There are numerous iterative methods to solve this problem;   gradient methods are among the most popular, see e.g. \cite{polyak1987,nest2004}. The behavior of gradient methods is simple enough: they exhibit monotone convergence both for objective function and distance to the minimum point. The situation with accelerated first-order methods is much more complicated. We will focus on one of them --- so called Heavy Ball method proposed in \cite{Polyak_1}:
\begin{equation}\label{mtsh}
x_{k+1}=x_k-\alpha \nabla f( x_k)+\beta (x_k-x_{k-1})=x_k-\alpha A x_k+\beta (x_k-x_{k-1})
\end{equation}

Here  a momentum term was added to the classic gradient method, which accelerated the convergence and made the trajectory look like a smooth descent to the bottom of the ravine, rather than zigzag. The traditional choice of initial condition is
\begin{equation}\label{x0}
x_1=x_0,
\end{equation}
i.e. the first iteration is just a gradient step.
It is known from \cite{Polyak_1} that for 
\begin{equation*}\label{al_bet}
0\le \beta <1, \quad 0< \alpha < \frac{2(1+\beta)}{L}
\end{equation*}
there is a convergence to the solution with linear rate, $||x_k||= O(q^k), \ q<1$.
The optimal parameters $\alpha$ and $\beta$ providing the fastest convergence $q=\frac{\sqrt{L}-\sqrt{\mu}}{\sqrt{L}+\sqrt{\mu}}$ are also known:
\begin{equation}\label{al_mtsh}
\alpha=\frac{4}{(\sqrt{L}+\sqrt{\mu})^2},
\quad
\beta= \left(\frac{\sqrt{L}-\sqrt{\mu}}{\sqrt{L}+\sqrt{\mu}}\right)^2 = q^2.
\end{equation}

However, the non-asymptotic behavior of the method with optimal parameters for a simple example is shown on Fig. \ref{oscill_o} and with parameters very close to optimal --- on Fig. \ref{oscill}. In all examples below matrix $A$ is taken diagonal.

\begin{figure}[h!]
\center{\includegraphics[width=\linewidth]{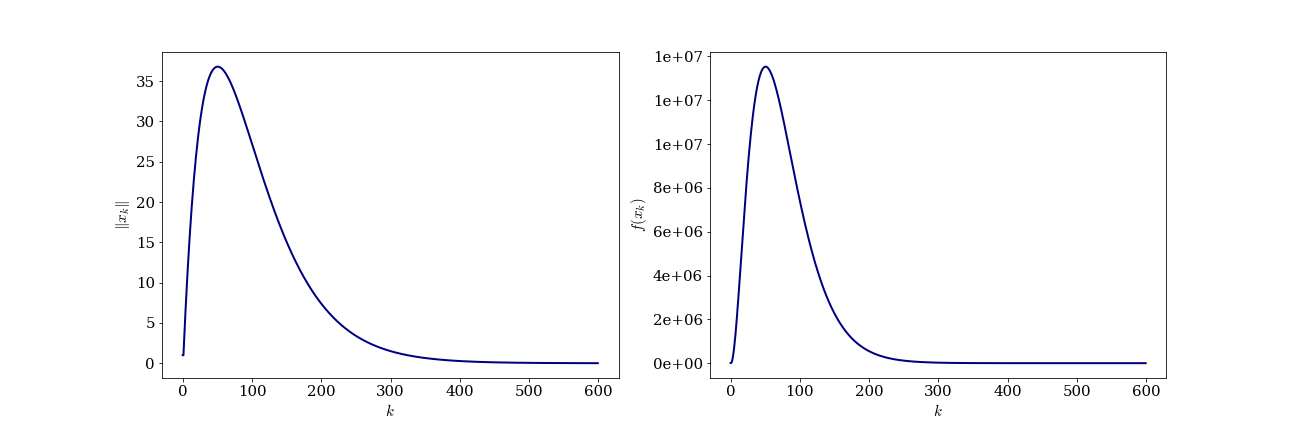}}
\caption{Non-monotone behavior of the Heavy Ball method with $x_0 = [0, 0, 0, 1]$,\ $\mu=1$,\ $L=10^4$,\ $\alpha,\ \beta$~--- optimal.}
\label{oscill_o}
\end{figure}

\begin{figure}[h!]
\center{\includegraphics[width=\linewidth]{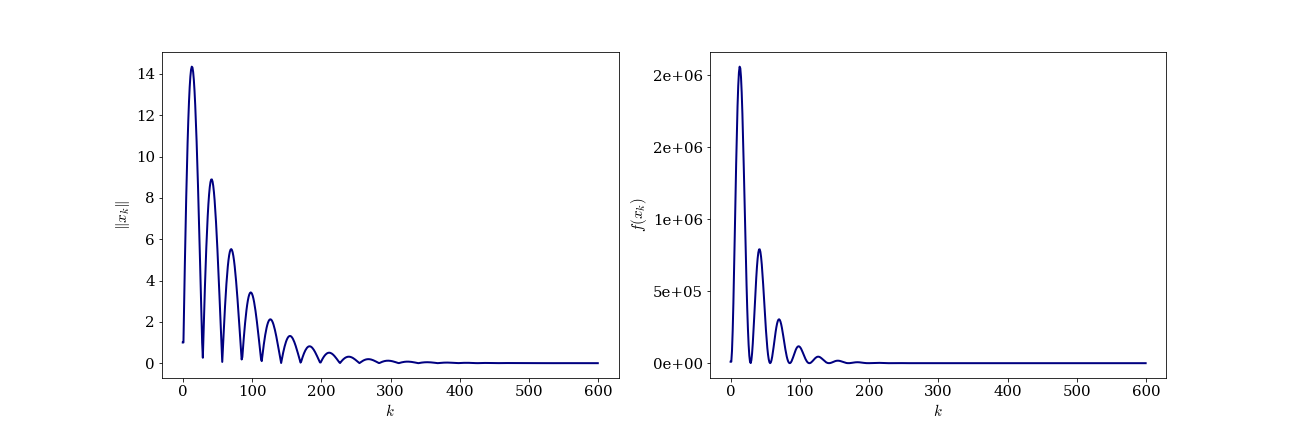}}
\caption{Non-monotone behavior of the Heavy Ball method with $x_0 = [0, 0, 0, 1]$,\ $\mu=1$,\ $L=10^4$,\ $\alpha,\ \beta$~--- close to optimal.}
\label{oscill}
\end{figure}

We conclude that the method exhibits strongly non-monotone 
behavior.
\subsection{Convergence analysis}
To explain the behavior  of the Heavy Ball method (\ref{mtsh}) with optimal $\alpha$ and $\beta$ (\ref{al_mtsh}) written in the form of second-order difference equation we consider it component-wise.

Let's start with the coordinate $x^1= (x,e_1)$, which corresponds to the minimal eigenvalue $\lambda_{\min} = \mu, \ A e_1=\mu e_1$. The  method for this  coordinate in the form of scalar linear difference equation along with its characteristic polynomial is written down below:

\begin{equation*}\label{mtsh_x1}
x^1_{k+1}=(1-\alpha \mu + \beta) x^1_k - \beta x^1_{k-1},
\end{equation*}

\begin{equation*}\label{poly_x1}
\rho^2 - (1-\alpha \mu + \beta) \rho + \beta = 0.
\end{equation*}
It is easily determined that a characteristic polynomial has both roots equal to $q$, meaning that
$\rho=q = \left( \frac{\sqrt{L} - \sqrt{\mu}}{\sqrt{L} + \sqrt{\mu}}\right).$ So the general solution is given by expression~(\ref{xk_rho}), while maximum provided by formula (\ref{peak_2}) with obvious change of notation.

Moving on to the coordinate $x^n=(x,e_n), \ A e_n=L e_n$, which corresponds to the maximum eigenvalue $\lambda_{\max} = L$, we notice that the only difference from the previous case is in the sign of roots of the characteristic polynomial, i.e. $\rho=-q$. However this implies different behavior of solutions, even for $x_0=x_1=1$ the trajectory is oscillating with possible large deviations. From formula (\ref{xk_rho}) we conclude that initial conditions $x_0=x_1=1$ cause the largest (in absolute value) peak effect equal to (\ref{peak_2}).

Now we consider a more general case of the coordinate $x^i=(x,e_i), \ 2\le i\le n-1, \  Ae_i=\lambda e_i, \ \mu<\lambda<L$. The characteristic polynomial
\begin{equation*}\label{poly_x2}
\rho^2 - (1-\alpha \lambda + \beta) \rho + \beta = 0
\end{equation*}
has complex roots
\begin{equation*}\label{x2_roots}
\rho_{1,2}=\frac{\left( \sqrt{L-\lambda} \pm i \sqrt{\lambda - \mu} \right)^2}{(\sqrt{L} + \sqrt{\mu})^2},\quad
|\rho|=\frac{\sqrt{L} - \sqrt{\mu}}{\sqrt{L} + \sqrt{\mu}} = q. 
\end{equation*}

The general solution is written down below:
\begin{equation*}\label{x2_sol}
x^i_k=\left[C_1 \cos(\omega k) + C_2 \sin(\omega k) \right]q^k, 
\end{equation*}
where
\[
\sin(\omega) = \frac{2 \sqrt{\lambda - \mu} \sqrt{L - \lambda}}{(L - \mu)}, \quad \cos(\omega) = \frac{L + \mu - 2 \lambda}{(L - \mu)}
\]

\[
C_1=x^2_0,\quad 
C_2=\frac{x^2_1(\sqrt{L}+\sqrt{\mu})^2 - x^2_0 (L+\mu-2\lambda)}{2\sqrt{\lambda-\mu}\sqrt{L-\lambda}}. 
\]
 The trajectory (for $n=3$) demonstrates  oscillations  (Fig. \ref{icdea_6}). 

\begin{figure}[h]
		\noindent
		\centering
		\includegraphics[width=0.8\linewidth]{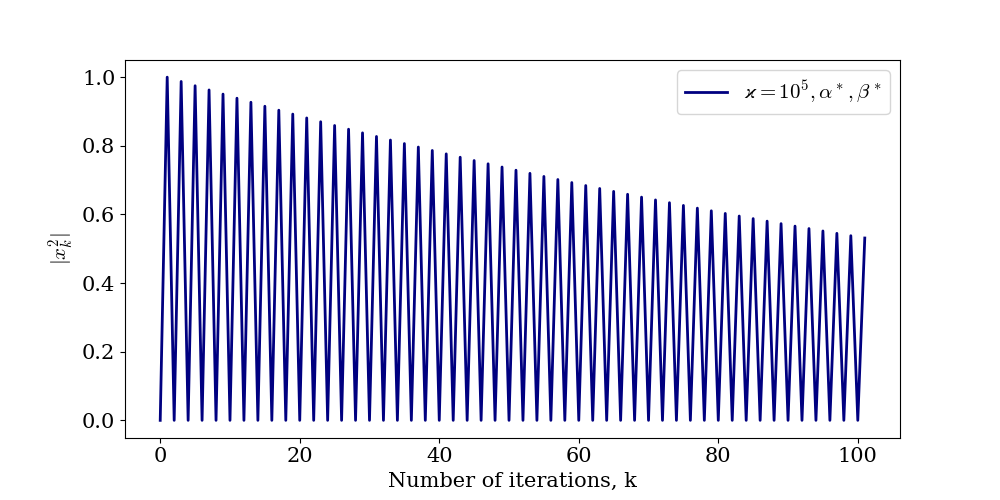}
	\caption{Dependence of the coordinate $|x^2| \ $ on the number of iterations $k$ under  conditions $x_0 = 0_n, \ x_1 = 1_n$.}
\label{icdea_6}
\end{figure}

\subsection{Peak effect}
From our previous observations in Section \ref{lde_sec} we note that peak effects in linear difference equations are common and depend on initial conditions. For the worst case the following proposition holds (we provide the estimates for the most important case of large condition number $\kappa=L/\mu$).

\begin{Th}
Assume that $f(x) = \frac12 \left( Ax, x\right),$ where $A \in \mathbb{R}^{n \times n}, \ A = A^{\top} \succ 0$. We have strong convexity parameter $\mu = \lambda_{\min} > 0$ and $L=\lambda_{\max}$, where $\lambda_{\min}$ and $\lambda_{\max}$ are the minimum and maximum eigenvalues of $A$, respectively. Then  initial conditions $x_0 =-e_1, \ x_1 =e_1\in \mathbb{R}^n, \ \|x_0\| = \|x_1\|=  1$ cause peak effect in the Heavy Ball method with optimal parameters $\ \alpha^*, \beta^*$:
\begin{equation*}
        \max\limits_{k}||x_k|| \geq \frac{\sqrt{\kappa}}{2 e},
\end{equation*}
while standard (\ref{x0}) initial conditions  $x_0 = x_1 =e_n\in \mathbb{R}^n, \ \|x_0\| = \|x_1\|=  1$  cause the same peak effect combined with oscillating behavior.      
        
\end{Th}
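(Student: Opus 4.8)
The plan is to reduce the vector recursion to the scalar equations already analyzed in Sections~\ref{lde_sec} and~3.2, and then to make the peak estimate from the equal-root case quantitative in terms of $\kappa$. First I would use that the iteration (\ref{mtsh}) is linear and leaves each eigenspace of $A$ invariant. For the start $x_0=-e_1,\ x_1=e_1$, both points lie in $\mathrm{span}(e_1)$ with $Ae_1=\mu e_1$, so a one-line induction shows $x_k\in\mathrm{span}(e_1)$ for all $k$ and hence $\|x_k\|=|x_k^1|$, where $x_k^1=(x_k,e_1)$ solves the scalar recursion whose characteristic polynomial, by Section~3.2, has the double root $\rho=q$. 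The projected initial data are exactly $(x_0^1,x_1^1)=(-1,1)$, i.e.\ the worst-case pair identified in (\ref{peak_2}); substituting into (\ref{xk_rho}) gives $\|x_k\|=|x_k^1|=kq^{k-1}+(k-1)q^k$.

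Second, since $q>0$ in this direction I would discard the nonnegative term $(k-1)q^k$ to obtain the clean bound $\|x_k\|\ge kq^{k-1}$ and maximize the right-hand side. Treating $k$ as continuous, the maximizer is $k^*=-1/\ln q$, where $k^*q^{k^*-1}=\bigl(e\,q\,(-\ln q)\bigr)^{-1}$. Combining the elementary inequality $-\ln q\le (1-q)/q$ with the identity $1-q=2/(\sqrt{\kappa}+1)$ (which follows from $q=(\sqrt{\kappa}-1)/(\sqrt{\kappa}+1)$) then yields $q(-\ln q)\le 1-q$, and therefore the continuous maximum is at least $1/\bigl(e(1-q)\bigr)=(\sqrt{\kappa}+1)/(2e)$. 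This already exceeds the target $\sqrt{\kappa}/(2e)$; the genuine peak, keeping both terms, is asymptotically $\sqrt{\kappa}/e$ in agreement with the estimate $\eta\approx 2/\!\left(e(1-\rho)\right)$, so the factor $2$ in the statement is precisely the slack used to absorb lower-order losses.

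The one genuinely delicate point --- and the main obstacle --- is that the maximizing index must be an integer, as reflected by the ceiling in (\ref{k_max}). Here I would control the rounding loss by the local expansion $\ln\!\bigl(kq^{k-1}\bigr)=\ln\!\bigl(k^*q^{k^*-1}\bigr)-(k-k^*)^2/(2k^{*2})+\dots$ around $k^*$; since $k^*\approx\sqrt{\kappa}/2$ is large, rounding $k^*$ to the nearest integer costs only a relative factor $1+O(1/\kappa)$, i.e.\ an absolute loss $O(1/\sqrt{\kappa})$, which for $\kappa$ large is dominated by the constant gap $1/(2e)$ between $(\sqrt{\kappa}+1)/(2e)$ and $\sqrt{\kappa}/(2e)$. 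This gives $\max_k\|x_k\|\ge \sqrt{\kappa}/(2e)$ and completes the first assertion.

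Finally, for the standard start $x_0=x_1=e_n$ I would repeat the decoupling, now inside $\mathrm{span}(e_n)$ with $Ae_n=Le_n$; by Section~3.2 the characteristic polynomial again has a double root but with the opposite sign, $\rho=-q$. Feeding $(x_0^n,x_1^n)=(1,1)$ and $\rho=-q$ into (\ref{xk_rho}) produces $x_k^n=(-1)^{k-1}\bigl(kq^{k-1}+(k-1)q^k\bigr)$, so $|x_k^n|$ equals the very same expression as before --- hence the identical peak lower bound --- while the alternating prefactor $(-1)^{k-1}$ accounts for the oscillating behavior asserted in the theorem.
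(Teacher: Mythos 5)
Your proposal is correct and follows essentially the same route as the paper: decompose the iteration along the eigenvectors of $A$, observe that the relevant coordinate obeys the scalar double-root recursion of Section~\ref{lde_sec} with $\rho=q$ (resp.\ $\rho=-q$), plug the initial data $(-1,1)$ (resp.\ $(1,1)$) into (\ref{xk_rho}), and estimate the resulting peak $kq^{k-1}+(k-1)q^k \approx \tfrac{2}{e(1-q)}$ via $1-q = 2/(\sqrt{\kappa}+1)$. The paper merely asserts that the theorem ``follows from the estimates obtained in the previous section,'' so your write-up --- in particular the explicit bound $-\ln q \le (1-q)/q$ and the control of the integer-rounding loss --- supplies quantitative details the paper leaves implicit.
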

    
The proof follows from the estimates obtained in the previous section. Of course, other initial conditions also may lead to non-monotone behavior of iterations; we indicate the ones which provide the largest deviations from the minimum point.

The figures below show that the Heavy Ball method exhibits the non-monotone behavior on the test problem $n=3, \ \kappa =10^4$, namely, a sharp increase in the function under various initial conditions.

\begin{figure}[H]
		\noindent
		\centering
		\includegraphics[width=\linewidth]{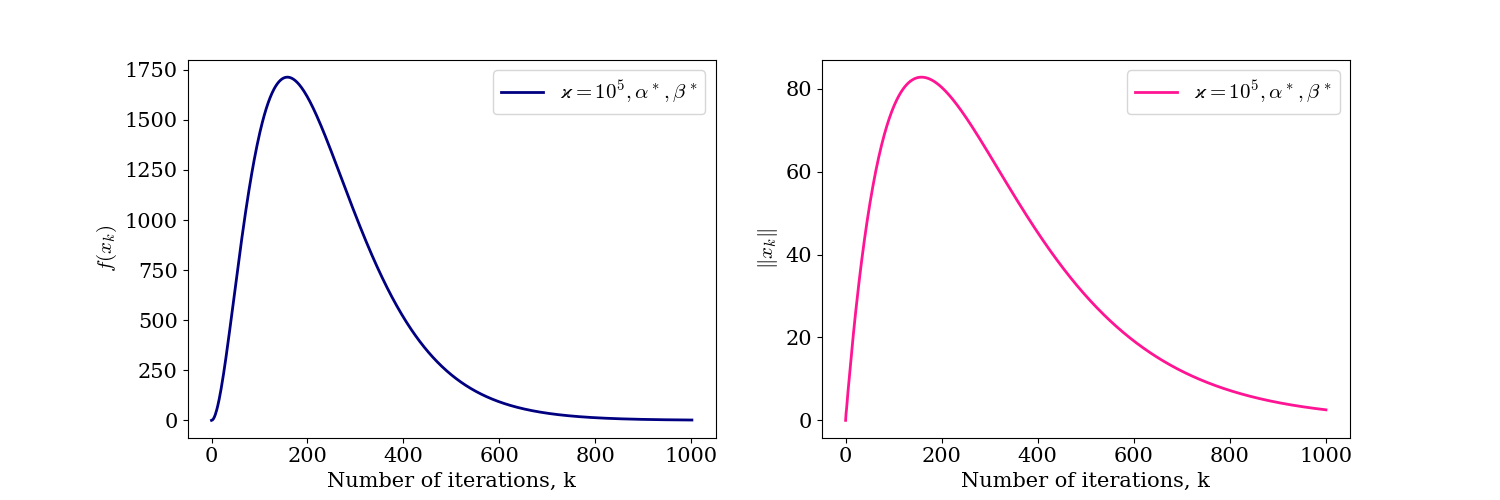}
	\caption{Dependence of the objective function $f(x_k) \ $ and $\|x_k\|$ on the number of iterations $k$ under the initial conditions $x_0 = 0_n, \ x_1 = 1_n$.}
\end{figure}
\begin{figure}[H]
		\noindent
		\centering
		\includegraphics[width=\linewidth]{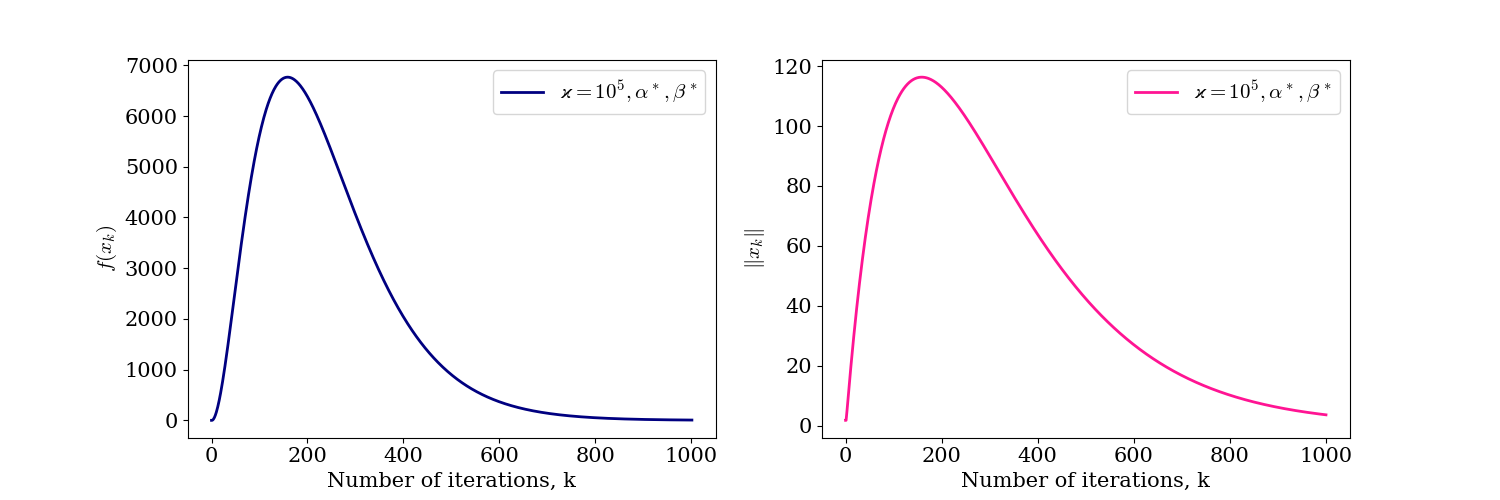}
	\caption{Dependence of the objective function $f(x_k) \ $ and $\|x_k\|$ on the number of iterations $k$ under the initial conditions $x_0 = x_1 = 1_n$.}
\end{figure}

To sum up, we applied the results on linear difference equations to the Heavy Ball method analysis. It was shown that even the choice of optimal parameters and standard initial conditions can not guarantee the monotone convergence. 

\section{Lyapunov function for the Heavy ball method}
In this section we extend the analysis of the Heavy Ball method (both for continuous and discrete versions) to non-quadratic objective functions via Lyapunov function technique. Thus we treat the unconstrained optimization problem
\begin{equation*}\label{min}
\min\limits_{x\in \mathbb{R}^n} f(x),    
\end{equation*}
where $f(x)$ is differentiable function bounded from below: $f(x)\ge f^*$. Notice that here we do not assume neither convexity nor strong convexity of $f(x)$.

\subsection{Construction of the Lyapunov function}

 Lyapunov functions is a common tool for proving the stability of nonlinear systems described by differential or difference equations. The Lyapunov function is a scalar function that decreases monotonically in stable system. 

The Heavy Ball method, as we verified above, does not exhibit monotone behavior even for the simplest case of quadratic function. Thus neither $f(x)$ nor $\|x-x^*\|$ can be used as the Lyapunov function. We suggest new Lyapunov function that can  help to select the parameters of the method and overcome the difficulties related to its non-monotone transient process.

\subsubsection{Continuous case.}

Before moving to the construction of the Lyapunov function for discrete case, we would like to consider continuous case from  \cite{Polyak_1}:
\begin{equation}
\label{eq4}
\ddot{x} + a\dot{x}+b\nabla f(x)=0.
\end{equation}

In mechanical interpretation, $x(t)\in  \mathbb{R}^n$ is the trajectory of the body (''heavy ball''), $\dot{x}, \ddot{x}$ are its velocity
and acceleration, $a>0, \ b>0$ are scalar parameters while $f(x)\ge f^*$ is the potential energy and $\nabla f(x)$ is its gradient. It is known from \cite{Polyak_1,Polyak_Sherbakov} that  $\nabla f(x(t))$ tends to zero, however the convergence can be non-monotone. Our goal is to obtain the upper bounds for the convergence.

Firstly, let's rewrite (\ref{eq4}):
\begin{eqnarray*}
&&\dot{x}=y,\\ 
&&\dot{y}=-ay-bf'(x) \nonumber
\end{eqnarray*}
with some initial conditions $x(0), \ y(0)$.
According to paper \cite{Polyak_Sherbakov}, $V(x,y)$ can be chosen from mechanical analogies. Consequently, it is possible to represent the function $V(x,y)$ in the form of the total energy of the system:
\begin{equation*}
V(x,y)=f(x)+\frac{1}{2b}\|y\|^2. 
\end{equation*}

For the time derivative we have
\begin{equation*}
\dot{V}(x,y) = \bigl(f'(x), y \bigr) + \frac{1}{2b}2\bigl(y, -ay-bf'(x) \bigr) = -\frac{a}{b}\|y\|^2\le 0. 
\end{equation*}

Thus we get an upper bound for $f(x(t))$:
\begin{equation*}\label{1}
 f(x(t))-f^*\le f(x(0))-f^*+\frac{1}{2b}\|y(0)\|^2.
\end{equation*}

In particular, for zero initial velocity $ f(x(t))-f^*\le f(x(0))-f^*$.

\subsubsection{Discrete case.}   
Now we proceed to discrete-time version of the Heavy Ball method
\begin{equation}
\label{discrete}
x_{k+1}=  x_{k}  -\alpha \nabla f(x_{k}) + \beta (x_{k} - x_{k-1}), 
\end{equation}
and assume additionally that $f$ is $L$-smooth:
\begin{equation*}
\label{L}
||\nabla f(x)-\nabla f(y)||\le L||x-y||. 
\end{equation*}
\begin{Th} \label{th2}
Assume that
\begin{equation}
\label{ab}
0< \alpha < \frac{1}{L}, \; 0\le \beta \le \sqrt{1 - \alpha L}.
\end{equation}
Then for any initial conditions $x_0, \ x_1 \in \mathbb{R}^n$ the function
\begin{equation}
\label{Lyap_function}    
V_k = f(x_k)-f^* + \frac{1-\alpha L}{2 \alpha} \|x_k - x_{k-1}\|^2
\end{equation}
is the Lyapunov function for the discrete case of the Heavy Ball method, that is $V_k\le V_{k-1}$.
\end{Th}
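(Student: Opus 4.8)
The plan is to establish the one-step inequality $V_{k+1}\le V_k$ directly; by a shift of index this is equivalent to the stated $V_k\le V_{k-1}$. First I would introduce the shorthand $g_k=\nabla f(x_k)$ and $d_k = x_k-x_{k-1}$, so that the iteration (\ref{discrete}) reads $d_{k+1}=x_{k+1}-x_k=-\alpha g_k+\beta d_k$. The only analytic ingredient required is the standard descent lemma for $L$-smooth functions, $f(x_{k+1})\le f(x_k)+\langle g_k,\,x_{k+1}-x_k\rangle+\frac{L}{2}\|x_{k+1}-x_k\|^2$; everything past this point is algebra in the three scalar quantities $\|g_k\|^2$, $\langle g_k,d_k\rangle$, and $\|d_k\|^2$.

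Next I would substitute $x_{k+1}-x_k=-\alpha g_k+\beta d_k$ into the descent lemma and expand, and in parallel expand the momentum contribution $\frac{1-\alpha L}{2\alpha}\bigl(\|d_{k+1}\|^2-\|d_k\|^2\bigr)$ using $\|d_{k+1}\|^2=\alpha^2\|g_k\|^2-2\alpha\beta\langle g_k,d_k\rangle+\beta^2\|d_k\|^2$. Adding the two pieces writes $V_{k+1}-V_k$ as a quadratic form in $(g_k,d_k)$, and the proof reduces to checking the signs of its three coefficients.

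The step where the real work lies --- and the reason for the particular constant $\frac{1-\alpha L}{2\alpha}$ in (\ref{Lyap_function}) --- is the coefficient of the cross term $\langle g_k,d_k\rangle$: the contribution $\beta(1-\alpha L)$ coming from the descent lemma is cancelled exactly by the contribution $-\beta(1-\alpha L)$ coming from the momentum term. This exact cancellation is the crux of the argument. Once it is in place, the coefficient of $\|g_k\|^2$ collapses to $-\alpha/2$, and the coefficient of $\|d_k\|^2$ simplifies (after clearing the factor $\frac{1}{2\alpha}$) to $\frac{\beta^2-(1-\alpha L)}{2\alpha}$.

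Finally I would conclude from the resulting bound $V_{k+1}-V_k\le -\frac{\alpha}{2}\|g_k\|^2+\frac{\beta^2-(1-\alpha L)}{2\alpha}\|d_k\|^2$. The condition $\alpha<1/L$ makes $1-\alpha L>0$, so the first term is non-positive and $V_k$ is itself a genuine sum of non-negative terms; the second term is non-positive precisely when $\beta^2\le 1-\alpha L$, i.e. $\beta\le\sqrt{1-\alpha L}$, which is exactly the hypothesis (\ref{ab}). Hence $V_{k+1}\le V_k$, and notably no convexity of $f$ is used anywhere. Beyond careful bookkeeping I do not anticipate a genuine obstacle; the single point requiring care is the exact cross-term cancellation, which simultaneously dictates the coefficient appearing in the Lyapunov function and the admissible range of $\beta$.
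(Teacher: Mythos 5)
Your proposal is correct and follows essentially the same route as the paper: apply the descent lemma for $L$-smooth functions, expand $\|x_{k}-x_{k-1}\|^2$ via the recursion, and verify that the cross term cancels exactly while the coefficients of $\|\nabla f(x_{k-1})\|^2$ and $\|x_{k-1}-x_{k-2}\|^2$ become $-\alpha/2$ and $\beta^2/(2\alpha)\le (1-\alpha L)/(2\alpha)$ respectively. The only difference is cosmetic (you work with $V_{k+1}-V_k$ as a single quadratic form rather than collecting terms on each side of the inequality), and your identification of the cross-term cancellation as the reason for the constant $\frac{1-\alpha L}{2\alpha}$ matches the paper's computation exactly.
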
 

\begin{proof}

For  iterations (\ref{discrete}), we have
\begin{equation}
\label{11}
x_k - x_{k-1} = -\alpha \nabla f(x_{k-1}) + \beta (x_{k-1} - x_{k-2}), 
\end{equation}

\begin{eqnarray}\label{12}
\|x_k - x_{k-1}\|^2 = \alpha^2 \|\nabla f(x_{k-1})\|^2 + \beta^2 \|x_{k-1} - x_{k-2}\|^2 \\ \nonumber
- 2 \alpha \beta \left( \nabla f(x_{k-1}), x_{k-1} - x_{k-2}\right).
\end{eqnarray}

Since function $f \in \mathscr F^{1,1}_{L} \ $, it has the Lipschitz gradient, so following equation from \cite{nest2004} can be applied
\begin{equation}
\label{13}
f(x_{k}) \leq f(x_{k-1}) + \left\langle \nabla f(x_{k-1}), x_{k} - x_{k-1}\right\rangle + \frac{L}{2} \|x_{k} - x_{k-1}\|^2.
\end{equation}

Adding (\ref{11}), (\ref{12}) to (\ref{13}), we get
\begin{eqnarray}\label{14}
\lefteqn{f(x_{k}) \leq f(x_{k-1}) - \alpha \|\nabla f(x_{k-1})\|^2 +} \\ \nonumber
& & \beta \left\langle \nabla f(x_{k-1}), x_{k-1} - x_{k-2}\right\rangle
+ \frac{\alpha^2 L}{2} \|\nabla f(x_{k-1})\|^2 + \\ \nonumber
& & \frac{\beta^2 L}{2} \|x_{k-1} - x_{k-2}\|^2  - L \alpha \beta \left( \nabla f(x_{k-1}), x_{k-1} - x_{k-2}\right).
\end{eqnarray}

Multiplying (\ref{12}) by $\frac{1-\alpha L}{2 \alpha}$
and adding to (\ref{14}) we obtain
\[
f(x_{k}) + \frac{1-\alpha L}{2 \alpha}  \|x_k - x_{k-1}\|^2  \; \leq \; f(x_{k-1}) - \alpha \|\nabla f(x_{k-1})\|^2 + 
\]
\[
\beta \left\langle \nabla f(x_{k-1}), x_{k-1} - x_{k-2}\right\rangle +
\frac{\alpha^2 L}{2} \|\nabla f(x_{k-1})\|^2 + \frac{\beta^2 L}{2} \|x_{k-1} - x_{k-2}\|^2 -
\]
\[
L \alpha \beta \left( \nabla f(x_{k-1}), x_{k-1} - x_{k-2}\right) + \frac{1-\alpha L}{2 \alpha}\cdot
\]
\[
\left( \alpha^2 \|\nabla f(x_{k-1})\|^2 + 
\beta^2 \|x_{k-1} - x_{k-2}\|^2 - 
2 \alpha \beta \left( \nabla f(x_{k-1}), x_{k-1} - x_{k-2}\right)\right).
\]

Collecting terms yields
\[
f(x_{k}) + \frac{1-\alpha L}{2 \alpha}  \|x_k - x_{k-1}\|^2  \; \; \leq \; \; f(x_{k-1}) + \left(  \frac{\beta^2 L}{2}  + \frac{(1-\alpha L)\beta^2}{2 \alpha} \right)\cdot
\]
\[
\|x_{k-1} - x_{k-2}\|^2 +
\left( \frac{\alpha^2 L}{2} - \alpha + \frac{(1- \alpha L) \alpha}{2} \right) \|\nabla f(x_{k-1})\|^2 +
\]
\[
\left(\beta - L \alpha \beta  -\beta + \alpha L \beta \right) \left( \nabla f(x_{k-1}), x_{k-1} - x_{k-2}\right).
\]

As a result,
\[ 
f(x_{k}) + \frac{1-\alpha L}{2 \alpha}  \|x_k - x_{k-1}\|^2  \; \; \leq \; \; f(x_{k-1}) + \left(  \frac{\beta^2 L}{2}  + \frac{(1-\alpha L)\beta^2}{2 \alpha} \right)\cdot  
\]
\[
\|x_{k-1} - x_{k-2}\|^2 +
 \left( \frac{\alpha^2 L}{2} - \alpha + \frac{(1- \alpha L) \alpha}{2} \right) \|\nabla f(x_{k-1})\|^2 
\]

\begin{itemize}
    \item
\[ \frac{\beta^2 L}{2}  + \frac{(1-\alpha L)\beta^2}{2 \alpha}  \; = \; \frac{\beta^2 L \alpha  + \beta^2 - \beta^2 \alpha L}{2 \alpha} \; = \; \frac{\beta^2}{2 \alpha} \; > \; 0 
\]
    \item
\[
    \frac{\alpha^2 L}{2} - \alpha + \frac{(1- \alpha L) \alpha}{2} = \frac{\alpha^2 L - 2 \alpha + \alpha - \alpha^2 L}{2} = - \frac{\alpha}{2} \; < \; 0
    \]
\end{itemize}

\begin{eqnarray}\label{15}
f(x_{k}) + \frac{1-\alpha L}{2 \alpha}  \|x_k - x_{k-1}\|^2  \; \; \leq \; \; f(x_{k-1}) + \left(  \frac{\beta^2}{2 \alpha}\right)  \|x_{k-1} - x_{k-2}\|^2 + \\ \nonumber
\left( - \frac{\alpha}{2} \right) \|\nabla f(x_{k-1})\|^2
\end{eqnarray}

Since $\left( - \frac{\alpha}{2} \right) \|\nabla f(x_{k-1})\|^2 \; < \; 0 \ $
and having (\ref{ab}) we arrive to the desired inequality $V_k\le V_{k-1}$.

\end{proof}
Theorem \ref{th2} provides the upper bound of $f(x_k)$ for the Heavy Ball method:
\begin{equation*}\label{f_k}
f(x_k)-f^*\le f(x_0)-f^*+\frac{1-\alpha L}{2\alpha}||x_1-x_0||^2   
\end{equation*}
and for standard initial condition $x_1=x_0$ we obtain 
\begin{equation*}\label{f_kst}
f(x_k)-f^*\le f(x_0)-f^*.    
\end{equation*}
Notice the lack of such bound for more narrow class of strongly convex quadratic functions, on the other hand this estimate holds for more restrictive conditions on parameters $\alpha, \ \beta$, see (\ref{ab}).
    
\subsubsection{Numerical experiments.}

We have obtained the following Lyapunov function:
\[
V_k = f(x_k) + \frac{1-\alpha L}{2 \alpha} \|x_k - x_{k-1}\|^2
\]
with the restriction on the parameters (\ref{ab}).

We would like to show numerically that the function proposed above is indeed the Lyapunov function. As a particular example a quadratic function $f(x), \ n = 20, \ \kappa = 10^5 \ $ is taken along with the Heavy Ball method with a set of parameters $\alpha, \ \beta$. It can be observed (Fig. \ref{mono}) that the Lyapunov function decreases monotonically on the trajectory of the method.

\begin{figure}[h]
    \centering
    \includegraphics[width=\linewidth]{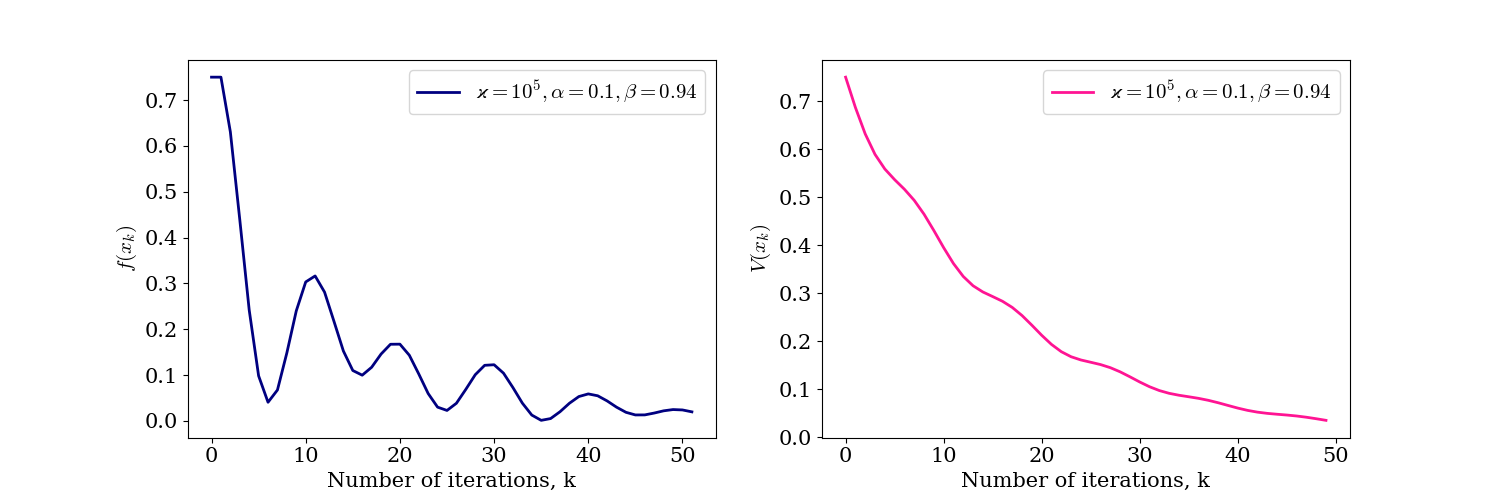}
    \centering
    \includegraphics[width=\linewidth]{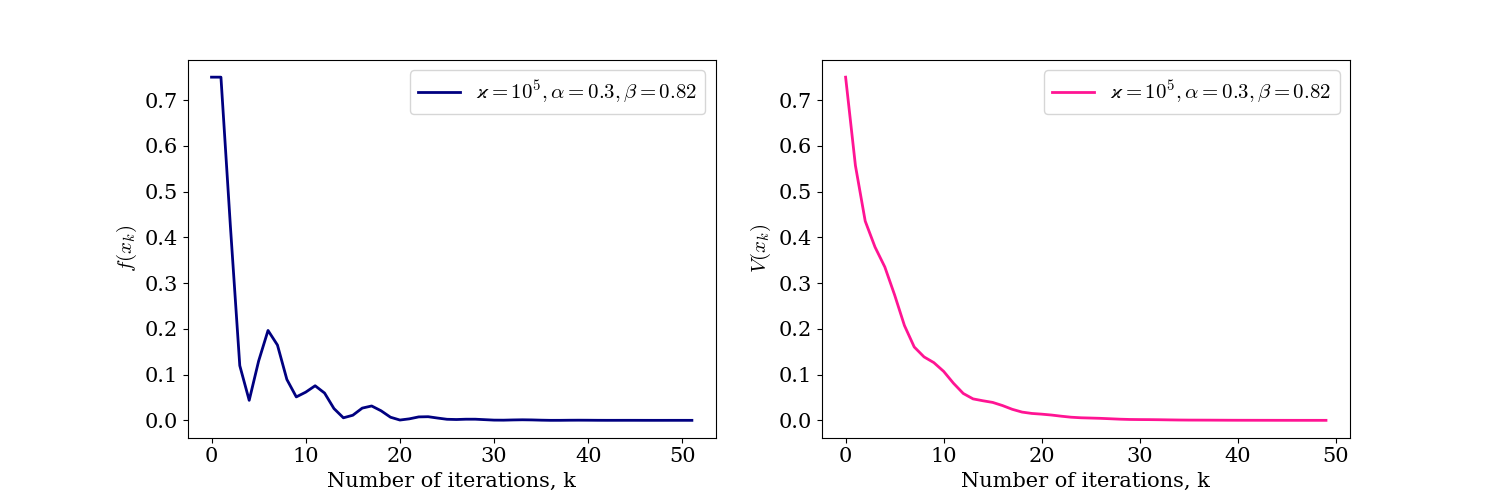}
    \centering
    \includegraphics[width=\linewidth]{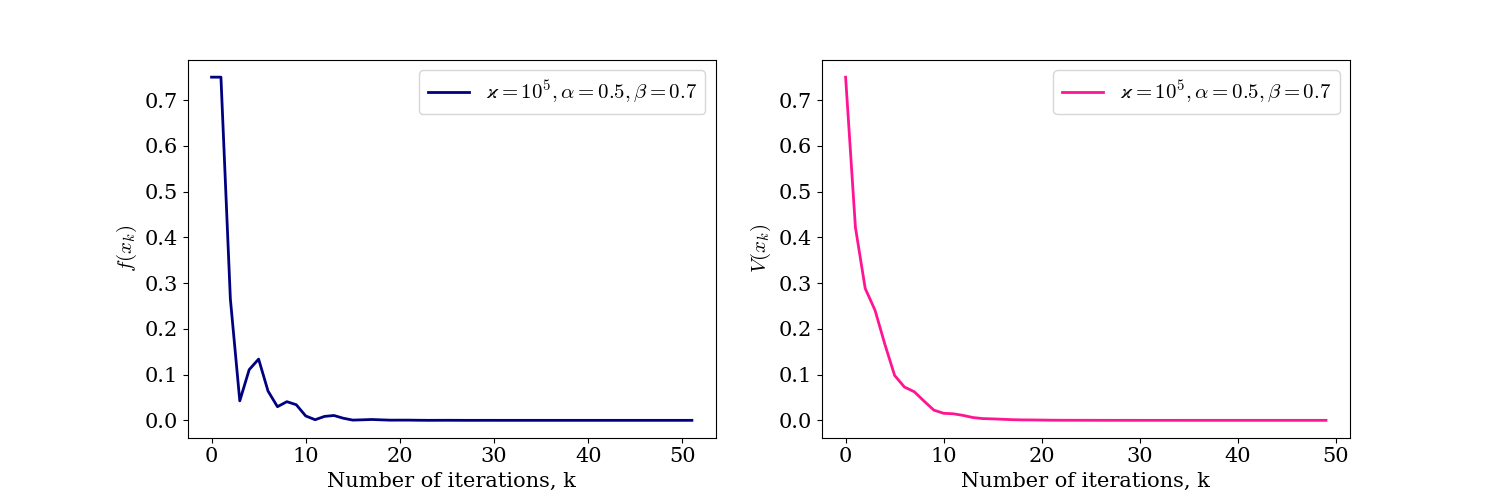}
    \caption{\label{mono}The behavior of objective function $f(x_k)$ and Lyapunov function $V(x_k)$ with different parameters $\alpha, \ \beta \ $ and the condition number $\kappa = 10^5$.}
\end{figure}

\subsection{Global convergence}
In the statement above we did not prove neither convergence of $f(x_k)$ to $f^*$ nor convergence $V_k$ to zero. To obtain such results further assumptions on $f(x)$ are needed. The least restrictive condition is 
\begin{equation}\label{PL}
||\nabla f(x)||^2\ge 2\mu (f(x)-f^*), \mu>0    
\end{equation}
for all $x\in \mathbb{R}^n$. This inequality is satisfied for strongly convex functions \cite{polyak1987,nesterov_1}, but in general it does not require convexity. The condition has been proposed in \cite{pol63}, sometimes it is called Polyak-$\L$ojasiewicz condition. 
\begin{Th} 
If (\ref{PL}) holds and 
\[\; \alpha \in \left( 0, \frac{1}{L}\right), \; \beta \in  \left[ 0, \sqrt{(1 - \alpha L)(1 - \alpha \mu)} \right].
\]
then for any initial conditions $x_0, x_1 \in \mathbb{R}^n$ Lyapunov function converges linearly
\begin{equation*}
\label{rate}
V_k\le V_0 q^k,\quad q=1-\alpha\mu <1,
\end{equation*}
while for $x_0 =x_1 $ objective function converges linearly
\begin{equation*}
\label{rate}
f(x_k)-f^*\le (f(x_0)-f^*) q^k.
\end{equation*}
\end{Th}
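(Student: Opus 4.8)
The plan is to bootstrap the one-step estimate already derived inside the proof of Theorem~\ref{th2} and show that, under the PL condition~(\ref{PL}) together with the sharper restriction $\beta \le \sqrt{(1-\alpha L)(1-\alpha\mu)}$, the Lyapunov function contracts by the factor $q = 1-\alpha\mu$ at every step rather than merely being nonincreasing. First I would recall inequality~(\ref{15}); subtracting $f^*$ from both sides it states
\[
V_k \;\le\; (f(x_{k-1})-f^*) + \frac{\beta^2}{2\alpha}\|x_{k-1}-x_{k-2}\|^2 - \frac{\alpha}{2}\|\nabla f(x_{k-1})\|^2 .
\]
Since the target is $V_k \le q V_{k-1}$, I would subtract $q V_{k-1} = q(f(x_{k-1})-f^*) + q\frac{1-\alpha L}{2\alpha}\|x_{k-1}-x_{k-2}\|^2$ from the right-hand side and prove that the resulting remainder is nonpositive.

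The remainder splits into two independent groups. For the terms carrying $f(x_{k-1})-f^*$ and $\|\nabla f(x_{k-1})\|^2$ I would use $1-q = \alpha\mu$ and apply~(\ref{PL}) in the form $-\frac{\alpha}{2}\|\nabla f(x_{k-1})\|^2 \le -\alpha\mu\,(f(x_{k-1})-f^*)$, giving
\[
(1-q)(f(x_{k-1})-f^*) - \frac{\alpha}{2}\|\nabla f(x_{k-1})\|^2 \;\le\; \alpha\mu(f(x_{k-1})-f^*) - \alpha\mu(f(x_{k-1})-f^*) = 0 .
\]
For the momentum term, the coefficient of $\|x_{k-1}-x_{k-2}\|^2$ equals $\frac{1}{2\alpha}\bigl(\beta^2 - (1-\alpha\mu)(1-\alpha L)\bigr)$, which is nonpositive precisely because $\beta^2 \le (1-\alpha L)(1-\alpha\mu)$. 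Both groups being $\le 0$ yields the contraction $V_k \le q V_{k-1}$, and iterating the recursion gives $V_k \le q^k V_0$.

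To obtain the bound on the objective I would note that $\alpha < 1/L$ makes the coefficient $\frac{1-\alpha L}{2\alpha}$ positive, so that $V_k \ge f(x_k)-f^*$; for the standard initialization $x_0 = x_1$ the momentum term vanishes and the initial Lyapunov value reduces to $f(x_0)-f^*$, whence $f(x_k)-f^* \le V_k \le q^k (f(x_0)-f^*)$. I would also verify $q = 1-\alpha\mu \in (0,1)$, which follows from $0<\alpha<1/L\le 1/\mu$. The computation is essentially routine once~(\ref{15}) is available; the one genuinely load-bearing observation is that the strengthened admissible interval $\beta \le \sqrt{(1-\alpha L)(1-\alpha\mu)}$, narrower than the condition $\beta \le \sqrt{1-\alpha L}$ of Theorem~\ref{th2}, is exactly what converts the plain monotonicity $V_k \le V_{k-1}$ into the strict geometric decay $V_k \le q V_{k-1}$. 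I therefore expect the main (if modest) difficulty to lie not in any single estimate but in the bookkeeping that matches this parameter window to the required negativity of the momentum coefficient and that aligns the base index so the stated $q^k$ rate holds.
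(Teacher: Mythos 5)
Your proposal is correct and follows essentially the same route as the paper: both start from inequality~(\ref{15}), apply the PL condition to absorb the gradient term into a factor $1-\alpha\mu$, and use $\beta^2\le(1-\alpha L)(1-\alpha\mu)$ to dominate the momentum coefficient, yielding $V_k\le qV_{k-1}$ and then the geometric decay; your subtraction of $qV_{k-1}$ is just a rearrangement of the paper's factoring of $(1-\alpha\mu)$. The only (shared) wrinkle is the base index --- the recursion naturally starts at $V_1$ rather than $V_0$, which you correctly flag and which the paper glosses over in the same way.
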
 

\begin{proof}

It was shown above (\ref{15}), that
\[
f(x_k) - f^* + \frac{1-\alpha L}{2 \alpha} \|x_k - x_{k-1}\|^2 \; \leq  \; f(x_{k-1}) - f^* + \frac{\beta^2}{2 \alpha} \|x_{k-1} - x_{k-2}\|^2 - 
\]
\[\frac{\alpha}{2} \|\nabla f(x_{k-1})\|^2.
\]

Due to (\ref{PL}) we get
\[
f(x_k) - f^* + \frac{1-\alpha L}{2 \alpha} \|x_k - x_{k-1}\|^2 \; \leq  \; f(x_{k-1}) - f^* + \frac{\beta^2}{2 \alpha} \|x_{k-1} - x_{k-2}\|^2 -
\]
\[\alpha \mu (f(x_{k-1}) - f^*) =
\left(1 - \alpha \mu \right) \left((f(x_{k-1}) - f^*) + \frac{\beta^2}{2 \alpha (1 - \alpha \mu)} \|x_{k-1} - x_{k-2}\|^2 \right).
\]

Provided that
\[
\frac{\beta^2}{2 \alpha (1 - \alpha \mu)} \; \leq \; \frac{1-\alpha L}{2 \alpha};
\quad \quad
\beta \; \leq \; \sqrt{(1 - \alpha L)(1 - \alpha \mu)} ,
\]

we obtain
\[
V(x_{k+1}) \; \leq \; q V(x_{k}) \; \leq \; q^k V(x_1), \quad q = (1 - \alpha \mu) < 1.
\]

For convenience, we denote $\ \gamma = \frac{1-\alpha L}{2 \alpha}\ $ and thus 
\[
f(x_{k+1}) - f^* \; \leq \; f(x_{k+1}) - f^*  + \gamma \|x_{k+1} - x_{k}\|^2 = V_{k+1} \; \leq 
\]
\[q^k V_1 = q^k \left( f(x_1) - f^*  + \gamma \|x_1 - x_0\|^2 \right) = q^k \left( f(x_0) - f^* \right).
\]

\end{proof}

Global convergence of the Heavy Ball method is established here for function under condition (\ref{PL}) through Lyapunov function (\ref{Lyap_function}). The similar results on global convergence for more narrow class of strongly convex functions were obtained in paper \cite{Global}. 

\subsection{Adaptive algorithm}
In this section we will consider a general idea of choosing the optimal parameters for the accelerated gradient method. 

We consider the smooth and strongly convex problem (\ref{opt_pr}) focusing on the Heavy Ball method (\ref{mtsh}). In order to ensure the convergence of the method, it is necessary to select the parameters (\ref{al_mtsh}) correctly. Therefore, the values of strong convexity constant $\mu$ and the Lipschitz constant $L$ are required. Unfortunately, these constants are difficult and time-consuming to compute for a real problem. Moreover, as already discussed earlier, accelerated first-order methods are not guaranteed to be monotonic.

To sum up, the following situations should be distinguished:
\begin{itemize}
    \item non-monotone behavior (natural situation arising in first-order methods);
    \item mistake in parameter values.
\end{itemize}

Wide range of papers offer different options for adaptive restarting schemes dedicated to avoiding non-monotone behavior. By restart we denote starting the method from the very beginning with new parameters, where new initial condition is the current iteration. It is suggested to restart after a fixed number of iterations (“fixed restart”) or more efficiently after checking certain conditions of restart (“adaptive restart”). The papers \cite{boyd2014}, \cite{Restart_2}, \cite{Restart_1}  propose the different adaptive restart schemes and analysis of these techniques. For example, \cite{Restart_2} offers the following two adaptive restart techniques. They restart whenever:
\begin{enumerate}
    \item function scheme
    \[
    f(x_k) > f(x_{k-1});
    \]
    \item gradient scheme
    \[
    \nabla f(x_{k-1})^T(x_k - x_{k-1}) > 0.
    \]
\end{enumerate}

In this work, we offer an alternative to restart techniques in the form of the Lyapunov function $\ V(x) \ $. This function strictly decreases at each iteration. It assures that the method is stable. The algorithm converges to optimal value. We propose to monitor the values of the Lyapunov function $\ V(x) \ $  instead of the objective function $\ f(x) \ $ at each iteration.
We will restart the algorithm with new parameters, only if the value of the Lyapunov function starts to increase. 

Lyapunov scheme
\[
V(x_k) > V(x_{k-1}).
\]
We propose construction of the Lyapunov function (\ref{Lyap_function}) for discrete case:
\[
V(x_k) = f(x_k) + \frac{1-\alpha L}{2 \alpha} \|x_k - x_{k-1}\|^2.
\]

Note that the correct choice of the parameters implies only the knowledge of Lipschitz constant, which can be determined iteratively according to \cite{Nesterov_3_L}. It means, that we will check inequalities (\ref{13}) with additional term $\frac{\epsilon}{2} \ $, where $\epsilon > 0$ --- the required accuracy.
\[
f(x_{k}) \leq f(x_{k-1}) + \left\langle \nabla f(x_{k-1}), x_{k} - x_{k-1}\right\rangle + \frac{L}{2} \|x_{k} - x_{k-1}\|^2 + \frac{\epsilon}{2}
\]
As a result, we obtain the following adaptive algorithm:
\begin{algorithm}[H]
	\caption{Adaptive Heavy ball method with Lyapunov function}
	\begin{algorithmic}[1]
		\State \textbf{Input:} $f \in \mathscr F^{1,1}_{L} \, , \ x_0=y_0 \in R^n\, , \ L_0 > 0, \ 
        \alpha_0 \in \left( 0, \frac{1}{L_0} \right), \ 
        \beta_0 \in \left( 0, \sqrt{1 - \alpha_0 L_0}\right)$.
		\State \textbf{for} $k \geq 0$ \textbf{do}
		\State $ \qquad x_{k+1} = x_k - \alpha_k \nabla f(x_k) + \beta_k \left(x_{k+1} - x_k \right)$
		\State $ \qquad $ \textbf{if} $ \ V(x_{k+1}) \ > \ V(x_k) $
		\State $ \qquad \qquad $ \textbf{then} $ \ L_{k+1} = 2L_k, \ \alpha_{k+1} \in \left( 0, \frac{1}{L_{k+1} } \right), \ 
        \beta_{k+1}  \in \left( 0, \sqrt{1 - \alpha_{k+1}  L_{k+1} }\right)$ 
		\State $ \qquad \qquad $ \textbf{else} $ \ L_{k+1} = L_k, \ \alpha_{k+1} = \alpha_k, \ \beta_{k+1} = \beta_k$ 
	\end{algorithmic}
\end{algorithm}

\section{Conclusion}

In this work, attention has been paid to behavior of second-order difference equations' solutions and their connection with accelerated gradient methods' convergence. Firstly, considering linear difference equations we derived the initial conditions causing peaking effects. In the next section these results were applied to the analysis of the Heavy Ball method in case of a quadratic objective function and optimal parameters $\alpha$ and $\beta$. Then, we moved on to discussing the non-monotonic behaviour of the method for strongly convex and smooth functions $ f(x) \in \mathscr F^{1,1}_{L,\mu} \ $. Finally, the concept of the Lyapunov function for method's control was suggested.

The future work implies expanding the notion of Lyapunov function to other classes of objective functions and developing an adaptive algorithm with a better convergence rate. The obtained results are supposed to be applied to numerous unconstrained optimization problems, arising in power system engineering, deep-learning and other fields.

\section*{Funding}
Financial support for this work was provided by the Russian Science Foundation, project no. 16-11-10015.

%
%

\end{document}